 \newlength{\baseunit}               
\newcommand\isom{\cong}
\newcommand\Proj{\operatorname{Proj}}
\newcommand\bq{\begin{equation}}
\newcommand\eq{\end{equation}}
\newtheorem{proposition}{Proposition}[section]
\newtheorem{theorem}[proposition]{Theorem}
\newtheorem{example}[proposition]{Example}
\newtheorem{lemma}[proposition]{Lemma}
\theoremstyle{definition}
\theoremstyle{remark}
\newtheorem{remark}[proposition]{Remark}
\numberwithin{equation}{section}
\newcommand{\cut}[1]{}
\newcommand\hidden[1]{}
\newcommand{\PP}{\mathbb{P}}
\newcommand{\QQ}{\mathbb{Q}}
\newcommand{\RR}{\mathbb{R}}
\newcommand{\dra}{\dashrightarrow}                                    %
\newcommand{\ZZ}{{\mathbb{Z}}}                                        %
\newcommand{\cO}{{\mathcal O}}                                        %
\newcommand{\Bl}{\operatorname{Bl}}                                   
\newcommand{\Cl}{\operatorname{Cl}}                                   
\title{Some non-finitely generated Cox rings}
\author{Jos\'e Luis Gonz\'alez and Kalle Karu}
\address{J.L. Gonz\'alez,  Department of Mathematics, University of British Columbia,
  Vancouver, BC V6T1Z2, CANADA  \newline \indent
K. Karu,
Department of Mathematics, University of British Columbia, 
  Vancouver, BC V6T1Z2, CANADA} 
\email{jgonza@math.ubc.ca, karu@math.ubc.ca}
\thanks{This research was funded by NSERC Discovery and Accelerator grants.}
\begin{document}

\begin{abstract}
We give a large family of weighted projective planes, blown up at a smooth point, that do not have finitely generated Cox rings. We then use the method of Castravet and Tevelev to prove that the moduli space $\overline{M}_{0,n}$ of stable $n$-pointed genus zero curves does not have a finitely generated Cox ring if $n$ is at least 13.
\end{abstract}
\maketitle
\setcounter{tocdepth}{1} 




\section{Introduction}

We work over an algebraically closed field $k$ of characteristic $0$. In their recent article \cite{CastravetTevelev}, Castravet and Tevelev proved that the moduli space $\overline{M}_{0,n}$ does not have a finitely generated Cox ring when $n\geq 134$. They reduced the non-finite generation problem from the case of moduli spaces to the case of weighted projective planes blown up at the identity element $t_0$ of the torus. Examples of such weighted projective planes have been studied previously by many algebraists, as the Cox rings of these blowups appear as symbolic algebras of monomial prime ideals. Goto, Nishida and Watanabe \cite{GNW} gave an infinite family of weighted projective planes $\PP(a,b,c)$, such that $\Bl_{t_0} \PP(a,b,c)$ does not have a finitely generated Cox ring. The smallest such example, $\PP(25,29,72)$, was used by Castravet and Tevelev to get the bound $n=134$.

We extend these results by giving a large family of weighted projective planes $\PP(a,b,c)$, such that the blowup $\Bl_{t_0}\PP(a,b,c)$ does not have a finitely generated Cox ring. This family includes all examples of Goto, Nishida and Watanabe, but also weighted projective planes with smaller numbers, such as $\PP(7,15,26)$, $\PP(7,22,17)$, $\PP(12, 13, 17)$ (Table~\ref{table.one} below lists more such examples). More generally, we study projective toric surfaces $X_\Delta$ of Picard number $1$, such that the blowup $\Bl_{t_0} X_\Delta$ does not have a finitely generated Cox ring.

Using the reduction method of Castravet and Tevelev we prove:

\begin{theorem} \label{thm-moduli}
The moduli space $\overline{M}_{0,n}$ does not have a finitely generated Cox ring when $n\geq 13$.
 \end{theorem}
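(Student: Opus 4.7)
The plan is to combine two ingredients already in play: (i) the authors' enlarged family of Picard number one projective toric surfaces $X_\Delta$ for which $\Bl_{t_0}X_\Delta$ has non-finitely generated Cox ring, and (ii) the reduction technique of Castravet and Tevelev \cite{CastravetTevelev}, which transfers the non-finite generation from such a blowup to $\overline{M}_{0,n}$ for $n$ sufficiently large relative to the toric data. Since the bound $n\geq 134$ of Castravet and Tevelev came from applying their reduction to the Goto-Nishida-Watanabe example $\PP(25,29,72)$, and since the new family contains triples with substantially smaller $a+b+c$ (such as $\PP(12,13,17)$, $\PP(7,15,26)$, $\PP(7,22,17)$ and presumably smaller ones recorded in Table~\ref{table.one}), the expectation is that plugging a minimal such triple into the same reduction procedure pushes the bound down to $13$.

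First, I would pick a triple $(a,b,c)$ from the authors' family that minimizes the quantity controlling the reduction. The Castravet-Tevelev argument realizes $\Bl_{t_0}\PP(a,b,c)$ as a GIT quotient of a Losev-Manin-type locus in $\overline{M}_{0,n}$ by a torus action, obtained by composing Kapranov-style blowdowns and forgetful maps; the minimal $n$ for which this realization exists is governed by $a$, $b$, $c$ (roughly by their sum, but more precisely by their individual sizes and the combinatorics of the fan of $\PP(a,b,c)$). I would search the new family for the triple that gives $n=13$ under this bookkeeping.

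Second, I would invoke the transfer of non-finite generation. Recall that if $Y$ is a Mori dream space and $Y'$ is obtained from $Y$ by a sequence of operations of the type (small modification) + (GIT quotient by a torus) + (passage to an invariant subvariety with compatible Cox ring structure), then finite generation of the Cox ring of $Y$ forces finite generation of the Cox ring of $Y'$; equivalently, non-finite generation is preserved in the reverse direction. Applying this in our setting with $Y=\overline{M}_{0,n}$ and $Y'=\Bl_{t_0}X_\Delta$ and using our chosen $X_\Delta$, we conclude that $\overline{M}_{0,n}$ cannot be a Mori dream space, hence its Cox ring is not finitely generated.

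The main obstacle is not the reduction machinery, which is essentially imported from \cite{CastravetTevelev} and works uniformly, but rather the combinatorial/numerical matching: one must verify that the authors' enlarged family genuinely contains a toric surface small enough to correspond to $n=13$ under the Castravet-Tevelev bookkeeping, and that their reduction, originally framed for weighted projective planes, extends without loss to the general Picard number one toric surfaces $X_\Delta$ considered here. A secondary subtlety is checking that the torus-equivariant GIT quotient used in the reduction picks out exactly $\Bl_{t_0}X_\Delta$ with its full Cox ring structure, so that non-finite generation genuinely transfers; once this is in place the theorem follows immediately.
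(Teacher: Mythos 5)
Your overall strategy is the paper's: prove non-finite generation for a small weighted projective plane from the new family and transfer it to $\overline{M}_{0,n}$ via the Castravet--Tevelev reduction (in the paper, via Theorem~\ref{thm-CT} and Okawa's result that finite generation descends along surjections of normal $\QQ$-factorial projective varieties). But there is a genuine gap at the step you wave through as ``bookkeeping.'' The minimal $n$ for which the reduction applies is \emph{not} read off from $(a,b,c)$ by anything like their sum or the fan of $\PP(a,b,c)$: one must actually exhibit, inside the Losev--Manin fan $\Sigma_n$ in $N=\ZZ^{n-3}$, a saturated sublattice $N'$ of rank $n-5$ spanned rationally by rays of $\Sigma_n$, together with three rays $u,v,w$ whose images generate $N/N'$ and satisfy $au+bv+cw\equiv 0 \pmod{N'}$. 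Naive bookkeeping of the kind you describe (the coefficients entering directly, ``roughly by their sum'') is exactly what produced $n=134$ from $\PP(25,29,72)$ in \cite{CastravetTevelev}; applied to $(26,15,7)$ it would still give a bound far above $13$. The whole content of the paper's proof of Theorem~\ref{thm-moduli} is the explicit configuration for $n=13$: ten $0$--$1$ vectors $a_1,\dots,a_{10}$ (rays of $\Sigma_{13}$) forming a basis of $\ZZ^{10}$, the sublattice $N'=\langle a_1,\dots,a_8\rangle$, and $u=e_1$, $v=e_2$, $w=e_3+e_5+e_6$ with $26u+15v+7w\equiv 0\pmod{N'}$, where the large coefficients $26,15,7$ are manufactured from small $0$--$1$ vectors by the relation $26u+15v+7w=11a_1+8a_2+4a_3+a_4+a_5+a_6-a_7-3a_8$. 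Your proposal neither supplies such a configuration nor explains how to find one, so the bound $13$ is not established.

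Two smaller points. The transfer mechanism you invoke (GIT quotients by tori and invariant subvarieties) is not quite how the paper states it: the reduction is packaged as a chain of small modifications and surjective morphisms of normal $\QQ$-factorial projective varieties, with non-finite generation pulled back via Okawa; and your worry about extending the reduction from weighted projective planes to general Picard-number-one $X_\Delta$ is moot, since only $\PP(26,15,7)$ (Example~\ref{example.wps}) is used. Finally, to get all $n\geq 13$ rather than just $n=13$ you need the surjections $\overline{M}_{0,n+1}\to\overline{M}_{0,n}$ and $\overline{M}_{0,n}\to\Bl_{t_0}\overline{L}_n$, which your write-up does not mention.
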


In the terminology of \cite{HuKeel}, the theorem implies that $\overline{M}_{0,n}$ is not a Mori dream space when $n\geq 13$. 
In contrast, it is known from \cite{HuKeel} that the variety $\overline{M}_{0,n}$ has a finitely generated Cox ring when $n\leq 6$ (see \cite{Castravet} for explicit generators in the case of $\overline{M}_{0,6}$). 
Remarkably, all log-Fano varieties have finitely generated Cox rings by \cite{BCHM}, but even though $\overline{M}_{0,n}$ is log-Fano for $n\leq 6$ that is not the case for $n>7$.   
In this way, Hu and Keel's question in \cite{HuKeel} of whether the Cox ring of $\overline{M}_{0,n}$ is finitely generated now only remains unsettled for $7\leq n\leq 12$.   

Let us recall that for a normal $\QQ$-factorial projective variety $X$ with a finitely generated class group $\Cl(X)$, a Cox ring of $X$ is any multigraded algebra of the form
\[ 
R(X; D_1,\ldots,D_r) = \bigoplus_{(m_1,\ldots,m_r) \in \ZZ^{r}} H^0(X, \cO_X(m_1D_1+\cdots+m_rD_r)), 
\] 
where $D_1,\ldots,D_r$ are Weil divisors whose classes span $\Cl(X) \otimes \QQ$. 
The finite generation of a Cox ring of $X$ is equivalent to the finite generation of every Cox ring of $X$, and it has strong implications for the birational geometry of $X$ (see \cite{HuKeel}). In the language of \cite{HuKeel}, $X$ is a Mori dream space if and only if $X$ has a finitely generated Cox ring.  


To construct the toric varieties $X_\Delta$, we start with a triangle $\Delta \subset \RR^2$ as shown in Figure~\ref{fig-tri1}. The vertices of $\Delta$ have rational coordinates, $(0,0)$ is one vertex, and the point $(0,1)$ lies in the interior of the opposite side. Such a triangle is uniquely determined by the slopes of its sides $s_1<s_2<s_3$. 

\begin{figure}[ht] \label{fig-tri1}
\centerline{\psfig{figure=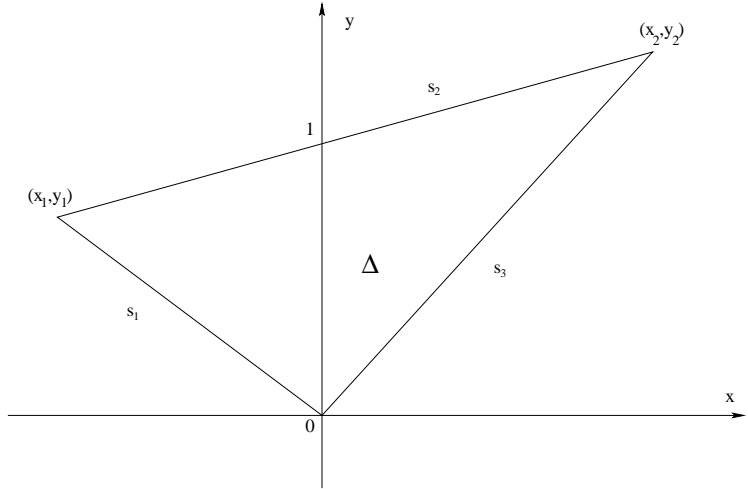,height=6.5cm}}
\caption{Triangle $\Delta$.}
\end{figure}

The triangle $\Delta$ defines a toric variety $X_\Delta$, whose fan is the normal fan of $\Delta$.  Let $\Bl_{t_0} X_\Delta$ be the blowup of $X_\Delta$ at the identity point of the torus $t_0 \in T\subset X_\Delta$.  

\begin{theorem} \label{thm-toric}
 Let the triangle $\Delta$ as in Figure~\ref{fig-tri1} be given by rational slopes $s_1<s_2<s_3$. The variety $\Bl_{t_0} X_\Delta$ does not have a finitely generated Cox ring if the following two conditions are satisfied:
\begin{enumerate}
 \item Let 
\[w = \frac{1}{s_2-s_1} + \frac{1}{s_3-s_2},\]
then $w<1$.
\item Let $n=|[s_1,s_2] \cap \ZZ|$. Then $|(n-1)[s_2, s_3]\cap \ZZ| = n$ and $n s_2 \notin \ZZ$.  
\end{enumerate}
\end{theorem}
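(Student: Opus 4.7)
The plan is to produce a specific ``negative curve'' $C$ on $Y := \Bl_{t_0} X_\Delta$ and then invoke a non-finite generation criterion in the spirit of \cite{GNW} and \cite{CastravetTevelev}. Since $X_\Delta$ has Picard number $1$, the blowup $Y$ has Picard number $2$, and $N^1(Y)_\Q$ is spanned by the exceptional divisor $E$ together with the pullback $H$ of an ample class on $X_\Delta$. The three torus-invariant prime divisors coming from the sides of $\Delta$ are mutually proportional in $\Cl(X_\Delta)_\Q$ with coefficients read off from the slopes $s_1 < s_2 < s_3$, and their proper transforms on $Y$ can be written in the $\{H,E\}$ basis with explicit coefficients.

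First I would construct $C$. The integer $n = |[s_1,s_2]\cap\Z|$ in condition (2) governs a Laurent polynomial $f$ on the torus whose Newton polytope is a lattice polygon supported in a strip transverse to the side of $\Delta$ of slope $s_2$; the lattice-count coincidence $|(n-1)[s_2,s_3]\cap\Z| = n$ is tuned so that $f$ vanishes at $t_0 = (1,1)$ to the largest possible multiplicity $m$ compatible with its support. I take $C$ to be the strict transform on $Y$ of the closure $\overline{\{f = 0\}} \subset X_\Delta$, so that in $N^1(Y)_\Q$ we have $[C] = \alpha H - m E$ for some rational $\alpha$ determined by the slopes.

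Second, I would verify $C^2 < 0$ using condition (1). The self-intersection $(\pi^*\overline{\{f=0\}})^2$ on $Y$ equals $(\overline{\{f=0\}})^2$ on $X_\Delta$, which is a mixed-area expression in the Newton polytope of $f$ and the polytope $\Delta$. Subtracting the exceptional contribution $m^2$, a direct bookkeeping in the slopes should show that $C^2 < 0$ precisely under the hypothesis $w = \frac{1}{s_2 - s_1} + \frac{1}{s_3 - s_2} < 1$, with $w$ appearing as the ratio of the Newton polygon contribution to $m^2$.

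The main obstacle, and the step where the finer arithmetic of condition (2) is decisive, is to pass from the existence of a negative curve to non-finite generation of the Cox ring. On a Picard number $2$ surface, finite generation of the Cox ring is equivalent to semiampleness of the extremal nef class $D := C^\perp$ on the boundary of the nef cone opposite to $H$. I would use the hypothesis $n s_2 \notin \Z$ to show that $D$ sits along an irrational direction relative to the sublattice of $N^1(Y)$ spanned by the torus-invariant divisor classes and $E$; hence no positive multiple of $D$ can be represented by a torus-invariant effective divisor. Combining this with the rigidity of $C$ (forced by $C^2 < 0$) and the toric description of global sections, one obtains $h^0(Y, kD) \le 1$ for all $k \ge 0$, so $D$ is nef but not semiample, and $\Bl_{t_0} X_\Delta$ fails to have a finitely generated Cox ring. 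The bulk of the technical labor is in (i) verifying the self-intersection formula with the precise slope coefficients and (ii) making rigorous the leap from $n s_2 \notin \Z$ to the assertion that no positive multiple of $D$ is effective, which is what ultimately obstructs semiampleness.
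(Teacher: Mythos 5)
There is a genuine gap, and it is in the two places where the theorem's hypotheses actually have to do work. First, the negative curve: you propose to build $C$ from a Laurent polynomial whose Newton polygon is ``tuned'' by condition (2) to vanish at $t_0$ with the largest possible multiplicity. No such construction is needed, and nothing in your sketch guarantees such an $f$ exists or identifies its class. The relevant curve is simply the strict transform of $\{1-y=0\}$ (the section of $\cO_{X_\Delta}(H)$ given by the two lattice points $(0,0)$ and $(0,1)$ of $\Delta$); it has multiplicity $1$ at $t_0$, class $H-E$, and $C^2=H^2+E^2=w-1<0$ is exactly condition (1), with no mixed-volume bookkeeping. Condition (2) plays no role in producing the negative curve.

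Second, and more seriously, your mechanism for non-semiampleness of $D=[H-wE]\in C^\perp$ would fail. Since the slopes are rational, $w\in\QQ$ and $D$ is a rational class, so there is no ``irrational direction''; moreover positive multiples of $D$ \emph{are} effective, e.g. $mD=m[C]+m(1-w)[E]$ is a nonnegative combination of the effective classes $[C]$ and $[E]$, so the assertion that no multiple of $D$ is effective (and hence $h^0(kD)\le 1$ by non-effectivity) is false as stated. The correct criterion (Cutkosky, or Hu--Keel) is that the Cox ring is finitely generated iff some effective divisor in some class $mD$ does \emph{not} contain $C$ as a component; so what must be proved is that every Laurent polynomial supported on $m\Delta\cap\ZZ^2$ and vanishing to order $mw$ at $t_0=(1,1)$ necessarily defines a section vanishing on $C$. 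The paper does this by showing the coefficient at a nonzero vertex of $m\Delta$ must vanish, via an explicit order-$(mw-1)$ differential operator that annihilates all monomials except those in the two leftmost columns and can be arranged to kill the $n$ monomials of the second column but not the vertex monomial; the hypothesis $|(n-1)[s_2,s_3]\cap\ZZ|=n$ controls the shape of the right-hand columns, and $ns_2\notin\ZZ$ enters precisely as the nondegeneracy condition $a(n+1)\neq bn$ needed for that operator to exist. This combinatorial/analytic step is the heart of the proof, and your proposal replaces it with an irrationality argument that cannot be repaired in that form.
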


The number $w$ in the theorem is the width of  $\Delta$: if $(x_1,y_1)$ and $(x_2,y_2)$ are the two nonzero vertices of $\Delta$, then $w=x_2-x_1$. To explain the second condition, consider a multiple $m\Delta$ that has integral vertices. A {\em column} in $m\Delta$ consists of all lattice points with a fixed first coordinate. Then $n$ is the number of lattice points in the second column from the left (i.e. with $x$-coordinate $m x_1+1$). The second condition  requires that the $n$-th column from the right (i.e. with $x$-coordinate $mx_2 - (n-1)$) contains exactly $n$ lattice points. Moreover, the $(n+1)$-th column from the right should not contain a lattice point on the top edge. 

It is  easy to construct examples of triangles $\Delta$ that satisfy the conditions of Theorem~\ref{thm-toric}. In fact, one can find a nonempty  open region in $\RR^3$, so that any rational point $(s_1,s_2,s_3)$ in that region defines such a triangle.

Different triangles may give rise to isomorphic toric varieties. However, if a triangle $\Delta$ exists with the property that $w<1$, then the two lattice points $(0,0)$ and $(0,1)$ in $\Delta$ are determined by the toric variety $X_\Delta$. 
The other triangles that give rise to toric varieties that are isomorphic by a toric morphism are obtained from $\Delta$ by applying an integral linear transformation that preserves the two lattice points. These transformations are generated by reflections across the $y$-axis and shear transformations $(x,y)\mapsto (x,y+ax)$ for $a\in\ZZ$. The shear transformation adds the integer $a$ to each of the three slopes and does not affect the two conditions of the theorem. The reflection replaces the slopes by their negatives and reflects the two conditions in a certain sense. (If $\Delta$ satisfies the two conditions, then its shear transform also satisfies them, but its reflection in general does not.)

\begin{example} Let    \label{example.wps}
\[ s_1=-\frac{2}{3},\quad s_2= \frac{1}{2}, \quad s_3 = 8.\]
 Then the two conditions of the theorem are satisfied with $w=104/105$ and $n=1$. The normal fan of $\Delta$ has rays generated by 
\[ v_1 = (2,3), \quad v_2 = (1,-2), \quad v_3 =(-8,1),\]
which satisfy the relation
\[ 15 v_1+26v_2+7v_3=0.\]
Moreover, since $v_1, v_2, v_3$ generate the lattice $\ZZ^2$, the toric variety $X_\Delta$ is the weighted projective plane $\PP(15,26,7)$. Then by Theorem~\ref{thm-toric}, $\Bl_{t_0} \PP(15,26,7) \cong \Bl_{t_0} \PP(26,15,7)$ does not have a finitely generated Cox ring. This last fact also admits a simpler direct proof, see Remark~\ref{simple.case}.
\end{example}

\begin{example} Take 
 \[ s_1 = -\frac{11}{3}, \quad s_2 = -\frac{4}{3}, \quad s_3 = \frac{2}{3}.\]
Then one checks that the two conditions are satisfied, with $w=3/7+3/6=13/14$, $n=2$. The normal fan of the triangle $\Delta$ has rays generated by
\[ v_1 = (11,3), \quad v_2 = (-4,-3), \quad v_3 =(-2,3),\]
satisfying the relation 
\[ 6v_1+13v_2+7v_3 = 0.\]
The vectors $v_1,v_2,v_3$ generate a sublattice of index $3$ in $\ZZ^2$. It follows that $X_\Delta$ is a quotient of $\PP(6,13,7)$ by an order $3$ subgroup of the torus. 
\end{example}

As the examples above illustrate, the toric varieties $X_\Delta$ are in general quotients of weighted projective planes $\PP(a,b,c)$ by a finite subgroup of the torus. We would like to know which weighted projective planes $\PP(a,b,c)$ correspond to triangles as in Theorem~\ref{thm-toric}. Let $e,f,g$ be positive integers, $\gcd(e,f,g)=1$, such that 
\[ ae+bf-cg = 0. \]
We call $(e,f,-g)$ a relation for $\PP(a,b,c)$.

\begin{theorem} \label{thm-proj}
Let  $\PP(a,b,c)$ be a weighted projective plane with relation $(e,f,-g)$. Then the blowup $\Bl_{t_0} \PP(a,b,c)$ does not have a finitely generated Cox ring if the following conditions are satisfied:
\begin{enumerate}
 \item Let 
\[ w= \frac{g^2c}{ab}.\]
Then $w<1$.
\item Let $n$ be the number of integers $\delta \leq 0$, such that
\[ (b,a)+\delta(e,-f)\]
has non-negative components, both divisible by $g$. Then there must exist exactly $n$ integers $\gamma\geq 0$, such that 
\[ (n-1)(b,a)+\gamma(e,-f)\]
has non-negative components, both divisible by $g$. Moreover, $n(b,a) \neq (0,0) \pmod{g}.$
\end{enumerate}
\end{theorem}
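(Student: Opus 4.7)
The plan is to reduce Theorem~\ref{thm-proj} to Theorem~\ref{thm-toric} by producing a triangle $\Delta$ with $X_\Delta \cong \PP(a,b,c)$ whose defining data transfer to the two theorems' conditions. Since $\gcd(e,f,g)=1$, the triple $(e,-g,f)$ is primitive in the rank-$2$ integer lattice $\{(x,y,z)\in\ZZ^3 : ax + cy + bz = 0\}$, so it extends to a $\ZZ$-basis together with some $(p_1,p_2,p_3)$. Setting $v_1=(p_1,e)$, $v_2=(p_2,-g)$, $v_3=(p_3,f)$ yields three primitive vectors in $\ZZ^2$ that span $\ZZ^2$ and satisfy $a v_1 + c v_2 + b v_3 = 0$; these are therefore the ray generators of the fan of a weighted projective plane isomorphic to $\PP(a,b,c)$, with weights $a,c,b$ on the respective rays. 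Let $\Delta$ be the corresponding triangle, normalized so that $(0,0)$ and $(0,1)$ are the two distinguished lattice points of Figure~\ref{fig-tri1}. Its edge slopes are $s_i = -p_i/q_i$, and after a possible relabeling of rays we have $s_1 < s_2 < s_3$ with $v_2$ the middle ray, so $q_1=e,\ q_2=-g,\ q_3=f$.

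For condition~(1), the standard identity that $|\det(v_i,v_j)|$ equals the weight of the third ray, combined with the ordering $s_1<s_2<s_3$, forces $p_1 g + p_2 e = b$ and $p_2 f + p_3 g = -a$. A direct computation then gives
\[
w \;=\; \frac{1}{s_2-s_1} + \frac{1}{s_3-s_2} \;=\; \frac{eg}{b} + \frac{fg}{a} \;=\; \frac{g(ae+bf)}{ab} \;=\; \frac{cg^2}{ab},
\]
where the last equality uses $ae + bf = cg$. Hence $w < 1$ is precisely condition~(1) of Theorem~\ref{thm-proj}.

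The remaining task is to translate condition~(2), which will be the main obstacle. Via the toric/Cox-ring description, lattice points of $m\Delta$ correspond bijectively to non-negative triples $(u_1,u_2,u_3)$ with $au_1 + cu_2 + bu_3 = mcg$, where $u_i = \langle \xi, v_i\rangle + m h_i$ and $h_2 = g$, $h_1 = h_3 = 0$. For $\xi$ in the second column from the left, one checks that $u_1 = (b - u_2 e)/g$ is forced, so these lattice points are parameterized by non-negative integers $u_2$ making $u_1$ (and hence $u_3$) non-negative integers. Setting $\delta = -u_2 \leq 0$ transforms this condition into precisely Theorem~\ref{thm-proj}(2), part~1: both entries of $(b+\delta e,\ a-\delta f)$ are non-negative and divisible by $g$. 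A parallel analysis of the $n$-th column from the right produces the $\gamma \geq 0$ condition of part~2, and the congruences $p_2 e \equiv b,\ p_2 f \equiv -a \pmod g$ (combined with $\gcd(e,f,g)=1$) allow us to translate $n s_2 = np_2/g \notin \ZZ$ into $n(b,a) \not\equiv (0,0)\pmod g$. Theorem~\ref{thm-toric} applied to $\Delta$ then yields Theorem~\ref{thm-proj}. The careful bookkeeping in this last step, matching column-wise lattice point enumeration on the $m\Delta$ side with the divisibility conditions on $\delta$ and $\gamma$ through systematic use of the relation $ae+bf = cg$, is the most technical component of the argument.
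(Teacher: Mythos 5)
Your proposal is correct and takes essentially the same route as the paper: both proofs reduce Theorem~\ref{thm-proj} to Theorem~\ref{thm-toric} by realizing $\PP(a,b,c)$ as $X_\Delta$ for the triangle whose vertical primitive direction is the relation $(e,f,-g)$, identifying $w=g^2c/(ab)$ with the width, and translating the lattice-point counts in the relevant columns into the divisibility conditions on $\delta$, $\gamma$ and $n(b,a)\pmod g$ (the paper does this intrinsically in the slice $\deg^{-1}(cg)\subset\RR^3$ with the column functional $\tfrac1c(f,-e,0)$, you via an explicit $\ZZ^2$-basis and the slopes $-p_i/q_i$, and your key identities, e.g.\ $gu_1+eu_2=b$ in the second column and the $\gcd(e,f,g)=1$ argument for $ns_2\notin\ZZ$, all check out). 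The only points to phrase more carefully are that placing the weight-$a$ edge on the left is a reflection (choice of sign of the complementary basis vector), not a ``relabeling of rays'' --- this matters because condition (2) is not symmetric under $(a,e)\leftrightarrow(b,f)$, though your sign conventions do land on the correct orientation --- and that the divisibility of the second entry $a-\delta f$ is precisely the integrality of $u_3$ (i.e.\ of the lattice point), not an automatic consequence of $u_1,u_2$ being non-negative integers.
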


To find whether the theorem applies to a weighted projective plane $\PP(a,b,c)$, one has to consider all relations $(e,f,-g)$, possibly after permuting $a,b,c$. However, there can be at most one such relation satisfying $w<1$, even after permuting $a,b,c$.  To find this relation, one only needs to consider the values $g<\sqrt{ab/c}$ and search for $e$ and $f$. In any case, finding the relation and checking the two conditions of the theorem are best done on a computer. Using a computer we found $6814$ weighted projective planes $\PP(a,b,c)$ with $a,b,c\leq 100$ that satisfy the conditions of the theorem. The $42$ cases with  $a,b,c\leq 30$ are listed in Table~\ref{table.one}.

\begin{table}[ht]     \label{table.one}
\begin{minipage}[b]{0.3\linewidth}\centering
\begin{tabular}{| c | c |}
\hline
\hline 
$\PP(a,b,c)$ & $(e,f,-g)$\\ 
\hline
$\PP(7, 15, 26)$ & $(1, 3, -2)$ \\ 
$\PP(7, 17, 29)$ & $(1, 3, -2)$ \\ 
$\PP(7, 22, 17)$ & $(1, 2, -3)$ \\ 
$\PP(7, 25, 19)$ & $(1, 2, -3)$ \\ 
$\PP(10, 11, 27)$ & $(1, 4, -2)$ \\ 
$\PP(10, 21, 13)$ & $(1, 2, -4)$ \\ 
$\PP(10, 29, 17)$ & $(1, 2, -4)$ \\ 
$\PP(11, 21, 25)$ & $(3, 2, -3)$ \\ 
$\PP(12, 13, 17)$ & $(1, 3, -3)$ \\ 
$\PP(12, 19, 23)$ & $(1, 3, -3)$ \\ 
$\PP(12, 25, 29)$ & $(1, 3, -3)$ \\ 
$\PP(13, 9, 29)$ & $(1, 5, -2)$ \\ 
$\PP(13, 18, 25)$ & $(3, 2, -3)$ \\ 
$\PP(14, 29, 25)$ & $(3, 2, -4)$ \\ 
\hline
\end{tabular}
\end{minipage}
\hspace{0.5cm}
\begin{minipage}[b]{0.3\linewidth}
\centering
\begin{tabular}{|c|c|}
\hline
\hline 
$\PP(a,b,c)$ & $(e,f,-g)$\\ 
\hline
$\PP(16, 25, 11)$ & $(1, 2, -6)$ \\ 
$\PP(17, 13, 23)$ & $(1, 4, -3)$ \\ 
$\PP(17, 16, 27)$ & $(1, 4, -3)$ \\ 
$\PP(17, 21, 20)$ & $(1, 3, -4)$ \\ 
$\PP(17, 25, 23)$ & $(1, 3, -4)$ \\ 
$\PP(17, 29, 26)$ & $(1, 3, -4)$ \\ 
$\PP(18, 23, 25)$ & $(3, 2, -4)$ \\ 
$\PP(19, 11, 13)$ & $(1, 3, -4)$ \\ 
$\PP(19, 22, 26)$ & $(2, 3, -4)$ \\ 
$\PP(19, 26, 29)$ & $(2, 3, -4)$ \\ 
$\PP(19, 27, 20)$ & $(1, 3, -5)$ \\ 
$\PP(19, 29, 11)$ & $(1, 2, -7)$ \\ 
$\PP(20, 21, 26)$ & $(1, 4, -4)$ \\ 
$\PP(20, 22, 27)$ & $(1, 4, -4)$ \\ 
\hline
\end{tabular}
\end{minipage}
\hspace{0.5cm}
\begin{minipage}[b]{0.3\linewidth}
\centering
\begin{tabular}{|c|c|}
\hline
\hline 
$\PP(a,b,c)$ & $(e,f,-g)$\\ 
\hline
$\PP(22, 13, 29)$ & $(1, 5, -3)$ \\ 
$\PP(22, 21, 17)$ & $(1, 3, -5)$ \\ 
$\PP(23, 28, 25)$ & $(3, 2, -5)$ \\ 
$\PP(24, 13, 19)$ & $(1, 4, -4)$ \\ 
$\PP(24, 17, 23)$ & $(1, 4, -4)$ \\ 
$\PP(24, 26, 17)$ & $(1, 3, -6)$ \\ 
$\PP(26, 18, 29)$ & $(1, 5, -4)$ \\ 
$\PP(27, 10, 29)$ & $(1, 6, -3)$ \\ 
$\PP(27, 17, 28)$ & $(1, 5, -4)$ \\ 
$\PP(27, 19, 14)$ & $(1, 3, -6)$ \\ 
$\PP(27, 22, 23)$ & $(1, 4, -5)$ \\ 
$\PP(27, 25, 17)$ & $(1, 3, -6)$ \\ 
$\PP(29, 19, 21)$ & $(1, 4, -5)$ \\ 
$\PP(29, 30, 17)$ & $(1, 3, -7)$ \\
\hline 
\end{tabular}
\end{minipage}
\label{tab-30}
\\ [2ex]
\caption{Weighted projective planes $\PP(a,b,c)$, $a,b,c\leq 30$,  with relation $(e,f,-g)$, that satisfy the conditions of Theorem~\ref{thm-proj}. }

\end{table}

\begin{example}
 Consider $\PP(19,11,13)$, with relation $(e,f,-g)=(1,3,-4)$. We check that the two conditions hold:
\begin{enumerate}
 \item $w = 208/209 <1.$
\item The set of integers $\delta \leq 0$, such that 
\[ (11,19)+ \delta(1,-3) \]
has non-negative components divisible by $4$ is $\delta\in \{-3, -7, -11\}$. Hence $n=3$. Now the set of integers $\gamma\geq 0$, such that 
\[ 2(11,19)+ \gamma(1,-3) \]
has non-negative components divisible by $4$ is $\gamma\in\{ 2,6,10\}$. Finally, $3(11,19) \neq (0,0) \pmod{4}$. 
\end{enumerate}
It follows that $\Bl_{t_0} \PP(19,11,13)$ does not have a finitely generated Cox ring.
\end{example}

\begin{example} [Goto, Nishida, Watanabe \cite{GNW}] Consider the family of weighted projective planes $\PP(7N-3, 8N-3, (5N-2)N)$, where $N\geq 4$, $3 \nmid N$. We check that the two conditions are satisfied with relation $(e,f,-g) = (N, N, -3)$. Note that we need $3 \nmid N$ for $\gcd(e,f,g)=1$.
 \begin{enumerate}
\item \[ w= \frac{9(5N-2)N}{(7N-3)(8N-3)} <1, \quad \text{when $N\geq 3$}.\]
\item For $\delta\in \{-2, -5\}$, 
\[ (8N-3,7N-3)+\delta(N, -N)\]
has nonnegative components divisible by $3$. Hence $n=2$. For $\gamma\in \{1,4\}$, 
\[ (8N-3,7N-3)+\gamma(N, -N)\]
has nonnegative components divisible by $3$. Moreover, since $3 \nmid N$,
\[ 2(8N-3, 7N-3) \neq (0,0) \pmod{3}.\]
 \end{enumerate}
Similarly, the other family considered by Goto, Nishida and Watanabe in \cite{GNW}, $\PP(7N-10, 8N-3, 5N^2-7N+1)$, $N\geq 5$, satisfies the two conditions with relation $(e,f,-g)=(N, N-1, -3)$. (In fact, the case $N=3$ of this family, $\PP(11,21,25)$, is listed in Table~\ref{table.one}.)
\end{example}


\section{Proof of Theorem~\ref{thm-toric}}

We start the proof using a geometric argument as in \cite{CastravetTevelev}.

The toric variety $X_\Delta$ is $\QQ$-factorial and $\Cl(X_\Delta)\otimes \QQ$ is $1$-dimensional, with basis the class of the $\QQ$-divisor $H$ corresponding to the triangle $\Delta$. Denote $X=\Bl_{t_0} X_\Delta$. Then $\Cl(X) \otimes \QQ$ is $2$-dimensional, with basis the classes of the exceptional divisor $E$ and the pullback of $H$, which we also denote $H$. 

Recall from toric geometry \cite{Fulton} that lattice points in $m\Delta$ correspond to certain torus-invariant sections of $\cO_{X_\Delta}(mH)$. We identify a lattice point $(i,j)$ with the monomial $x^iy^j$ considered as a regular function on the torus $T$. The two lattice points $(0,0)$ and $(0,1)$ define a section $1-y$ of $\cO_{X_\Delta}(H)$. We let $C$ be the strict transform of this curve in $X$. Then $C$ has class $[H-E]$. Since the self-intersection $H^2$ is equal to twice the area of $\Delta$ (which equals $w$), we get
\[ C^2 = H^2+E^2 = w-1 <0.\] 
It follows that $C$ and $E$ are two negative curves on $X$ whose classes generate the Mori cone of curves of $X$ (which in this case coincides with the pseudoeffective cone of $X$). Its dual, the nef cone of $X$, is generated by the class of $H$ and the class $D=[H-wE] \in C^\perp$. 

By a result of Cutkosky \cite{Cutkosky}, the Cox ring of $X$ is finitely generated if and only if there exists an integer $m>0$, such that some effective divisor in the class $mD$ does not have the curve $C$ as a component. We will fix an $m$ large and divisible enough, such that $mD$ is integral and prove that any section of $\cO_X(mD)$ vanishes on $C$. We may replace $m$ by an integer multiple if necessary. Notice that this implies that although $D$ generates an extremal ray of the nef cone of $X$ it is not semiample, so the Cox ring of $X$ cannot be finitely generated by \cite[Definition 1.10 and Proposition 2.9]{HuKeel}.

A divisor in the class $mD$ is defined by a Laurent polynomial (considered as a regular function on the torus $T$) 
\[ f(x,y) = \sum_{(i,j)\in m\Delta \cap\ZZ^2} a_{ij} x^iy^j,\]
that vanishes to order at least $W=mw$ at the point $t_0=(1,1)$. In other words, all partial derivatives of $f$ of order up to $W-1$ vanish at $t_0$. Now it suffices to prove that for such $f$, the coefficient $a_{mx_1,my_1}$ at one of the nonzero vertices of $m\Delta$ is zero. Indeed, this implies that the section defined by $f$ vanishes at the $T$-fixed point in $X_\Delta$ corresponding to the vertex. Similarly, the curve $C$ passes through that fixed point, but since $C\cdot D=0$, it follows that $f$ must vanish on $C$.

\begin{remark} There is a more algebraic argument for the claim in the previous paragraph. We want to prove that $f(x,y)$ vanishes on $C$, in other words, that $1-y$ divides $f$. This happens if and only if the column sums
\[ c_i = \sum_{(i,j) \in m\Delta\cap \ZZ^2} a_{ij}\]
all vanish. There are $W+1$ column sums $c_i$. The derivatives $\partial_x^l$ for $l=0,\ldots,W-1$ give $W$ linearly independent relations on $c_i$. If we can find one more linearly independent relation, then $c_i=0$ for all $i$. The vanishing of $a_{mx_1,my_1}$ gives such an extra relation.
\end{remark}  

We first transform the triangle $m\Delta$ by integral translations and shear transformations $(i,j) \mapsto (i, j+ai)$ for $a\in\ZZ$. The translation operation multiplies $f$ with a monomial, and the shear transformation performs a change of variables on the torus. The two operations do not affect the order of vanishing of $f$ at $t_0$ or the conditions of the theorem. The shear transformation has the effect of adding the integer $a$ to each of the three slopes $s_1,s_2,s_3$. 

Let us start by bringing $m\Delta$ to the form shown in Figure~\ref{fig-tri2}. We first apply a shear transformation, so that $-2< s_2 < -1$. (Note that $s_2\notin\ZZ$ by condition (2).) We then translate the triangle so that $(mx_1,my_1)$ moves to a point with $x$-coordinate $-2$ and  $(mx_2,my_2)$ moves to a point on the $x$-axis. Call the transformed triangle $\tilde{\Delta}$. Note that the transformations do not change the number of lattice points in the columns. In particular, the second column from the left in $\tilde{\Delta}$ again contains $n$ lattice points.

\begin{figure}[ht] \label{fig-tri2}
\centerline{\psfig{figure=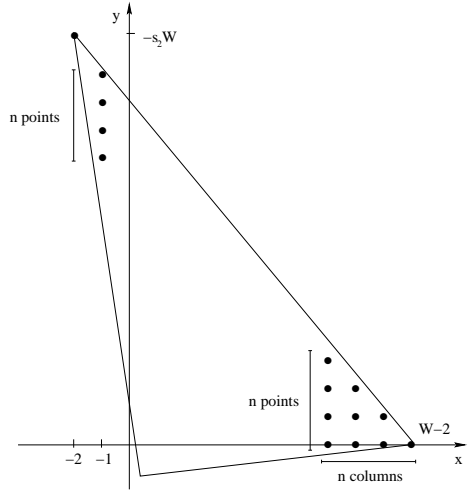,height=6.5cm}}
\caption{Triangle $\tilde{\Delta}$.}
\end{figure}

Consider now the right $n$ columns of the triangle $\tilde{\Delta}$. If $n=1$, then no more preparations are needed. For $n>1$, we may assume that the second column from the right contains at least $2$ lattice points. Otherwise apply the reflection $(i,j)\mapsto (-i,j)$ to the original triangle $\Delta$ to reduce to the case $n=1$. Since $-2\leq s_2 < -1$, it follows that the lattice points in the second column from the right have $y$-coordinates $1,0,\ldots.$ By condition (2), the $n$-th column from the right contains exactly $n$ lattice points, which must have $y$-coordinates $0,1,\ldots,n-1$. It now follows that for any $j=1,\ldots,n$, the $j$-th column from the right contains exactly $j$ lattice points with $y$-coordinates $0,1,\ldots, j-1$. In summary, the lattice points in the $n$ columns on the right are  
\[ (W-n-1+i, j), \quad i,j\geq 0, \quad i+j< n.\]

Consider a derivative 
\[ ( \sum_{i=0}^n \alpha_i \partial_x^i \partial_y^{n-i}) \partial_x^{W-n-1}\]
of order $W-1$. This derivative vanishes on all monomials $x^iy^j$ for $(i,j)\in\tilde{\Delta}\cap\ZZ^2$, except for the monomials with $i<0$. There are $n+1$ such monomials, corresponding to lattice points in the left two columns of $\tilde{\Delta}$. We claim that there exist coefficients $\alpha_i$, such that the derivative, when evaluated at $t_0$, vanishes on all $n$ monomials corresponding to lattice points in the second column, and it does not vanish on the monomial corresponding to the vertex. This implies that the coefficient  in $f$ of the monomial corresponding to the vertex must be zero.  

Consider  monomials $x^iy^j$ with $(i,j)\in \tilde{\Delta}\cap\ZZ^2$ and $i<0$. Let us first apply $\partial_x^{W-n-1}$ to these monomials. The result is the set of monomials (with nonzero coefficients that we may ignore)
\[ x^{-(a+1)}y^{b+n+1}, x^{-a}y^{b+j}, \quad j=0,\ldots,n-1.\]
Here $a=W-n$ and $b=-s_2W-n-1$. Making $m$ (and hence also $W$) bigger if necessary, we may assume that both $a,b>0$. Lemma~\ref{lem-deriv} below shows that we can choose the desired coefficients $\alpha_i$ if $a(n+1) \neq bn$. This condition is equivalent to $-s_2\neq 1+1/n$, which follows from the assumption that $ns_2\notin\ZZ$. \qed

\begin{lemma} \label{lem-deriv}
 Let $n,a,b>0$ be integers, such that $a(n+1)\neq bn$. Consider two sets of monomials:
\[ S_1= \{ x^{-(a+1)}y^{b+n+1}\}, \quad S_2=\{x^{-a}y^{b+j}\}_{j=0,\ldots,n-1}.\]
Then there exists a derivative 
\[D = \sum_{i=0}^n \alpha_i \partial_x^{n-i} \partial_y^{i},\]
such that $D$ applied to every monomial in $S_2$ vanishes at $t_0=(1,1)$, and $D$ applied to the monomial in $S_1$ does not vanish at $t_0$.
\end{lemma}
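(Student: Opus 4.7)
My plan is to translate Lemma~\ref{lem-deriv} into a concrete linear-algebra problem in $\F^{n+1}$, solve for the (one-parameter family of) operators $D$ satisfying the vanishing conditions on $S_2$, and then verify non-vanishing on $S_1$ via two applications of the Chu--Vandermonde identity for falling factorials.

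\textbf{Reduction.} Writing $p^{\underline{k}} := p(p-1)\cdots(p-k+1)$ for the falling factorial, applying $D = \sum_{i=0}^n \alpha_i \partial_x^{n-i}\partial_y^i$ to $x^p y^q$ and evaluating at $t_0=(1,1)$ gives $\sum_i \alpha_i p^{\underline{n-i}} q^{\underline{i}}$. So the lemma amounts to finding $\alpha = (\alpha_0,\ldots,\alpha_n) \in \F^{n+1}$ with
\[
\sum_i \alpha_i (-a)^{\underline{n-i}}(b+j)^{\underline{i}} = 0 \quad (j=0,\ldots,n-1), \qquad \sum_i \alpha_i (-a-1)^{\underline{n-i}}(b+n+1)^{\underline{i}} \neq 0.
\]

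\textbf{Solving the vanishing conditions.} The $n$ equalities say precisely that the polynomial $Q(Y) := \sum_i \alpha_i (-a)^{\underline{n-i}} Y^{\underline{i}}$, of degree at most $n$, vanishes at the $n$ distinct points $b, b+1, \ldots, b+n-1$. Hence $Q(Y) = c(Y-b)^{\underline{n}}$ for some scalar $c$, and expanding
\[
(Y-b)^{\underline{n}} = \sum_k \binom{n}{k}(-b)^{\underline{n-k}} Y^{\underline{k}}
\]
via Chu--Vandermonde lets us read off $\alpha_k = c\binom{n}{k}(-b)^{\underline{n-k}}/(-a)^{\underline{n-k}}$ (all denominators are nonzero, $-a$ being a negative integer).

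\textbf{Evaluating the $S_1$-functional.} The elementary identity $(A-1)^{\underline{k}}/A^{\underline{k}} = (A-k)/A$, verified by comparing factorizations, with $A=-a$ clears the denominators in $\alpha_k$, so that
\[
R := \sum_k \alpha_k (-a-1)^{\underline{n-k}}(b+n+1)^{\underline{k}} = -\frac{c}{a}\sum_k \binom{n}{k}(-b)^{\underline{n-k}}(b+n+1)^{\underline{k}}\,(-a-n+k).
\]
Split $-a-n+k$ as $(-a-n) + k$. The first piece, via Chu--Vandermonde with $X=-b$, $Y=b+n+1$, yields $(-a-n)(n+1)!$. For the second, use $k\binom{n}{k}=n\binom{n-1}{k-1}$, shift the summation index by one, peel off the factor $(b+n+1)$ from $(b+n+1)^{\underline{k}}$, and apply Chu--Vandermonde once more to obtain $n\cdot n!\cdot(b+n+1)$. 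Collecting and simplifying,
\[
R = \frac{c\cdot n!}{a}\bigl((n+1)a - nb\bigr),
\]
which is nonzero under the hypothesis $(n+1)a \neq nb$. Taking any $c \neq 0$ then furnishes the required operator $D$.

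\textbf{Main obstacle.} The only delicate step is orchestrating the two Chu--Vandermonde applications cleanly; once one notices the splitting $-a-n+k = (-a-n) + k$ both pieces collapse immediately, and everything else is routine bookkeeping.
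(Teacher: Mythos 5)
Your proof is correct, and it takes a genuinely different route from the paper's, even though it ends up with the same coefficients. The paper first builds a one-variable annihilator $\tilde D=\sum_i\beta_i\partial_y^i$ of $y^{b},\dots,y^{b+n-1}$ (its Lemma on $\tilde D$), rescales the $\beta_i$ by falling factorials of $a$ to get the $\alpha_i$, and then proves non-vanishing on $S_1$ by a longer manipulation: rewriting the sum, extending the range of summation, and invoking the fact that $\sum_i(-1)^i p(i)\binom{N}{i}=0$ for $\deg p<N$, with the boundary terms producing $\pm(bn-a(n+1))$. You instead solve the $S_2$-conditions all at once: in the falling-factorial basis they say that a polynomial of degree at most $n$ vanishes at the $n$ consecutive integers $b,\dots,b+n-1$, hence is a scalar multiple of $(Y-b)(Y-b-1)\cdots(Y-b-n+1)$; expanding by Chu--Vandermonde recovers exactly the paper's $\alpha_i$ (your $(-b)^{\underline{n-k}}/(-a)^{\underline{n-k}}$ equals the paper's $[b+n-k-1]_{n-k}/[a+n-k-1]_{n-k}$), and the $S_1$-evaluation collapses after the split $-a-n+k=(-a-n)+k$ via two further Vandermonde convolutions, giving $\frac{c\,n!}{a}\bigl(a(n+1)-bn\bigr)$, which matches the paper's final expression up to a nonzero factor. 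All the small steps check out: the denominators $(-a)^{\underline{n-k}}$ are nonzero since $-a<0$, the ratio identity $(A-1)^{\underline{k}}/A^{\underline{k}}=(A-k)/A$ is valid for $A=-a$, and the two convolution evaluations $(n+1)^{\underline{n}}=(n+1)!$ and $n^{\underline{n-1}}=n!$ are right. What your argument buys is self-containedness (no auxiliary lemmas) plus the extra information that the space of admissible $D$'s is exactly one-dimensional; what the paper's route buys is the explicit one-variable annihilator and a reusable vanishing lemma for alternating binomial sums, which it also employs elsewhere in its proof of that auxiliary statement.
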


\begin{proof}
To start, there exists a nonzero derivative
 \[ \tilde{D} = \sum_{i=0}^n \beta_i \partial_y^i,\]
such that $\tilde{D}$ applied to monomials $y^{b+j}$ for $j=0,\ldots, n-1$, vanishes at $y=1$. By Lemma~\ref{lem-deriv1} below we may take
\[ \beta_i = (-1)^i \frac{(b+n-i-1)!}{(b-1)!} {n \choose i}.\]
Now let
\[ \alpha_i = (-1)^i \frac{\beta_i}{a\cdot(a+1)\cdots(a+n-i-1)} = \frac{[b+n-i-1]_{n-i}}{[a+n-i-1]_{n-i}} {n \choose i},\]
where we used the notation
\[ [k]_l = k(k-1)\cdots(k-l+1).\]
With these coefficients $\alpha_i$, the derivative $D$ applied to the monomials in $S_2$ vanishes at $t_0$. We need to prove that $D$ applied to the monomial in $S_1$ does not vanish at $t_0$.    
 
We apply $D$ to $x^{-(a+1)}y^{b+n+1}$ and evaluate at $t_0$ to get
\[ \sum_{i=0}^n  \frac{[b+n-i-1]_{n-i}}{[a+n-i-1]_{n-i}} {n \choose i} (-1)^{n-i} [a+n-i]_{n-i} [b+n+1]_i.\]
Now simplify:
\[ \frac{[a+n-i]_{n-i}}{[a+n-i-1]_{n-i}} = \frac{a+n-i}{a},\]
\[ [b+n-i-1]_{n-i}  [b+n+1]_i = \frac{[b+n+1]_{n+2}}{(b+n-i)(b+n-i+1)}.\]
Replacing $i$ by $n-i$, the sum becomes 
\[ \frac{[b+n+1]_{n+2}}{a} \sum_{i=0}^n (-1)^i\frac{a+i}{(b+i)(b+i+1)} {n\choose i}.\]
We can further express
\[  \frac{1}{(b+i)(b+i+1)} {n\choose i} = \frac{[b+i-1]_{b-1}}{[b+n+1]_{b+1}} {b+n+1 \choose b+i+1},\]
hence the sum is 
\[ \frac{[b+n+1]_{n+2}}{a[b+n+1]_{b+1}} \sum_{i=0}^n (-1)^i(a+i)[b+i-1]_{b-1}{b+n+1 \choose b+i+1} .\]
We may ignore the nonzero constant in front of the sum and write the rest as
\[ \sum_{i=-(b+1)}^n (-1)^i(a+i)[b+i-1]_{b-1}{b+n+1 \choose b+i+1} -  \sum_{i=-(b+1)}^{-1} (-1)^i(a+i)[b+i-1]_{b-1}{b+n+1 \choose b+i+1}.\]
Since $p(x)=(a+x)[b+x-1]_{b-1}$ is a polynomial of degree $b$, the first sum vanishes by Lemma~\ref{lem-binom} below. In the second sum, the terms $ [b+i-1]_{b-1}$ are zero unless $i=-b-1$ or $i=-b$. Thus, the sum is
\begin{gather*} -\big( (-1)^{-b-1} (a-b-1) [-2]_{b-1}{ n+b+1 \choose 0} + (-1)^{-b} (a-b) [-1]_{b-1} { n+b+1 \choose 1} \big) \\
= \pm [-1]_{b-1} (bn-a(n+1)).
\end{gather*}
Now the result follows. 
\end{proof}

\begin{lemma} \label{lem-binom}
 Let $n>0$ be an integer and $p(x)$ a polynomial of degree less than $n$. Then
\[ \sum_{i=0}^n (-1)^i p(i) {n\choose i} = 0.\]
\end{lemma}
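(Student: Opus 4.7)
The plan is to recognize the identity as the vanishing of the $n$-th iterated forward difference of $p$ at $0$. Define the forward difference operator $\Delta$ on functions $\Z \to \Q$ by $(\Delta f)(x) = f(x+1)-f(x)$. First I would verify, by a short induction on $n$ using Pascal's rule $\binom{n}{i} = \binom{n-1}{i-1} + \binom{n-1}{i}$, the standard formula
\[ (\Delta^n f)(x) = \sum_{i=0}^n (-1)^{n-i} \binom{n}{i} f(x+i). \]
Setting $x = 0$ and multiplying by $(-1)^n$ transforms the sum in the lemma into $(-1)^n (\Delta^n p)(0)$, so the lemma reduces to showing $(\Delta^n p)(0) = 0$.

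The second and final step is the observation that $\Delta$ lowers polynomial degree by exactly one: if $p(x) = c_d x^d + (\text{lower order})$ with $c_d \ne 0$ and $d \ge 1$, then the leading $x^d$ terms cancel in $p(x+1)-p(x)$ and the next term is $d c_d x^{d-1}$; constants are annihilated by $\Delta$. Iterating, $\Delta^n$ sends every polynomial of degree less than $n$ to zero, so in particular $(\Delta^n p)(0) = 0$.

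The argument is completely formal and I do not expect any real obstacle; the only thing requiring a small check is the inductive derivation of the closed form for $\Delta^n f$, which is immediate from Pascal's rule. An alternative route, if one prefers to avoid introducing $\Delta$, is to use linearity to reduce to a basis of the space of polynomials of degree less than $n$, e.g. the falling factorials $[x]_k$ with $0 \le k \le n-1$; then the identity $[i]_k \binom{n}{i} = [n]_k \binom{n-k}{i-k}$ collapses the sum to $[n]_k \sum_{j=0}^{n-k} (-1)^{j+k} \binom{n-k}{j}$, which vanishes by the binomial theorem since $n-k > 0$.
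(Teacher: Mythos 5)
Your proposal is correct, and its main route is genuinely different in mechanism from the paper's, though both are short classical arguments. The paper fixes the falling-factorial basis $[x]_l = x(x-1)\cdots(x-l+1)$, $0\le l<n$, of the space of polynomials of degree less than $n$, and for each basis element obtains the vanishing from $\sum_{i=0}^n (-1)^i [i]_l \binom{n}{i} = \partial_x^l (1-x)^n\big|_{x=1} = 0$; linearity then finishes. You instead phrase the sum as $(-1)^n(\Delta^n p)(0)$ for the forward difference operator $\Delta$, prove the closed form for $\Delta^n$ by Pascal's rule, and use the operator statement that $\Delta$ lowers degree by one, so $\Delta^n$ kills all polynomials of degree less than $n$. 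The operator formulation buys you a basis-free argument and a statement valid at every point (not just $0$), at the cost of the small induction establishing the closed form; the paper's version avoids any induction by letting the derivative of $(1-x)^n$ at $x=1$ do the work, but requires naming a basis. Your closing alternative (expand $p$ in falling factorials and use $[i]_k\binom{n}{i} = [n]_k\binom{n-k}{i-k}$ to collapse to $(1-1)^{n-k}=0$) is essentially the paper's proof with the analytic step $\partial_x^l(1-x)^n|_{x=1}=0$ replaced by the algebraic binomial expansion; either way the key hypothesis $n-k>0$, i.e.\ $\deg p < n$, enters in the same place. No gaps in any of the three routes.
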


\begin{proof}
 We have for $0\leq l<n$ 
\[ \sum_{i=0}^n (-1)^i i(i-1)\cdots (i-l+1) {n\choose i} = \partial_x^l (1-x)^n |_{x=1} = 0.\]
The polynomials $x(x-1)\cdots (x-l+1)$ for $l=0,\ldots, n-1$ span the space of all polynomials of degree less than $n$.
\end{proof}

\begin{lemma} \label{lem-deriv1}
 Let $n, b>0$ be integers. Then the derivative
\[ \tilde{D} = \sum_{i=0}^{n} (-1)^i \frac{(b+n-i-1)!}{(b-1)!} {n \choose i} \partial_y^i\]
applied to monomials $y^{b+j}$, $j=0,\ldots, n-1,$ vanishes at $y=1$. 
\end{lemma}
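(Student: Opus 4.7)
My plan is to reduce the lemma to a finite binomial-coefficient identity and resolve it via Vandermonde's convolution. First I will compute that $\partial_y^i(y^{b+j})\big|_{y=1} = [b+j]_i = i!\binom{b+j}{i}$. Substituting into the definition of $\tilde{D}$, the assertion $\tilde{D}(y^{b+j})\big|_{y=1}=0$ becomes the claim that for each $j \in \{0,1,\ldots,n-1\}$,
\[ \sum_{i=0}^n (-1)^i\, \frac{(b+n-i-1)!}{(b-1)!}\, \binom{n}{i}\, i!\, \binom{b+j}{i} = 0. \]

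Next, I will reindex by $k = n-i$ and use $\binom{n}{i}\,i! = n!/k!$ together with $(b+k-1)!/((b-1)!\,k!)=\binom{b+k-1}{k}$. After pulling out the nonzero constant $(-1)^n n!$, the sum becomes
\[ \sum_{k=0}^{n} (-1)^k \binom{b+k-1}{k}\binom{b+j}{n-k}. \]
Applying upper negation $\binom{b+k-1}{k}=(-1)^k\binom{-b}{k}$ cancels the alternating sign and produces the Vandermonde convolution
\[ \sum_{k=0}^{n} \binom{-b}{k}\binom{b+j}{n-k} = \binom{(-b)+(b+j)}{n} = \binom{j}{n}. \]
For $0\le j\le n-1$ this binomial coefficient vanishes, completing the proof.

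The only real obstacle is recognizing the Vandermonde structure hidden inside the original coefficients; once the factorials are rewritten as binomial coefficients and the signs are redistributed via upper negation, the identity follows immediately from a standard convolution, and no induction on $n$ or clever derivative manipulation is required. A sanity check at $j=n$ is also reassuring, since then $\binom{j}{n}=1$, which matches the normalization that ultimately makes the argument in Lemma~\ref{lem-deriv} produce a nonzero value at the vertex monomial.
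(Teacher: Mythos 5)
Your proof is correct, and it takes a genuinely different route from the paper. You convert the statement into a single closed-form binomial identity: after writing $\partial_y^i(y^{b+j})\big|_{y=1}=i!\binom{b+j}{i}$, reindexing by $k=n-i$, and applying upper negation, the sum collapses via Chu--Vandermonde to $(-1)^n n!\binom{j}{n}$, which vanishes precisely for $0\le j\le n-1$. All the algebraic manipulations check out (e.g.\ $\binom{n}{i}i!=n!/k!$, $\frac{(b+k-1)!}{(b-1)!\,k!}=\binom{b+k-1}{k}$, and $(-1)^k\binom{b+k-1}{k}=\binom{-b}{k}$), and the Chu--Vandermonde identity is valid for an arbitrary upper argument such as $-b$, so the argument is complete. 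The paper instead factors the coefficient of $[b+j]_i$ as $[b+j]_{j+1}\,[b+n-i-1]_{n-j-1}$, observes that $[b+n-i-1]_{n-j-1}$ is a polynomial in $i$ of degree $n-j-1<n$, and invokes its Lemma~\ref{lem-binom} (the $n$-th finite difference annihilates polynomials of degree less than $n$), which is the same tool it reuses in the proof of Lemma~\ref{lem-deriv}; this keeps the whole paper running on one elementary lemma. Your route buys a sharper conclusion: an exact evaluation $\tilde{D}(y^{b+j})\big|_{y=1}=(-1)^n n!\binom{j}{n}$ for every $j\ge 0$, which in particular exhibits the nonvanishing at $j=n$ for free, whereas the paper's method only certifies the vanishing for $j<n$. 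One small caveat: your closing remark tying the $j=n$ value to the vertex monomial in Lemma~\ref{lem-deriv} is only heuristic, since the vertex monomial there is $x^{-(a+1)}y^{b+n+1}$ and the actual nonvanishing computed in that lemma involves the $x$-derivatives as well; this does not affect the correctness of your proof of the present lemma.
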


\begin{proof}
 We apply the derivative $\tilde{D}$ to the monomial $y^{b+j}$ and evaluate at $y=1$ to get
\[ \sum_{i=0}^n (-1)^i \frac{(b+n-i-1)!}{(b-1)!}{ n\choose i} [b+j]_i.\]
Using that $b+j-i+1\leq b+n-i-1$, we simplify
\begin{align*} 
\frac{(b+n-i-1)!}{(b-1)!} [b+j]_i &= (b+j)(b+j-1)\cdots(b+j-i+1) (b+n-i-1)\cdots (b+1)b \\
&= [b+j]_{j+1} [b+n-i-1]_{n-j-1}. 
\end{align*}
The polynomial $p(x) = [b+n-x-1]_{n-j-1}$ has degree $n-j-1<n$, hence
\[ [b+j]_{j+1} \sum_{i=0}^n (-1)^i p(i) {n\choose i} = 0\]
by the previous lemma.
\end{proof}

\begin{remark} \label{simple.case} In the case $n=1$, the proof of Theorem~\ref{thm-toric} can be simplified considerably. For this, we first transform the polytope $m\Delta$ by a reflection across the $y$-axis, shear transformation and translation to get to $\tilde{\Delta}$ that has its left vertex at $(-1,b)$, right vertex at $(W-1, 0)$ and the single lattice point in the second column from the right at $(W-2, 0)$. Now the derivative $\partial_x^{W-2} \partial_y$ vanishes on all monomials except the monomial corresponding to the left vertex.
\end{remark}


\section{Proof of Theorem~\ref{thm-proj}}

Recall that the weighted projective plane $\PP(a,b,c)$ is defined as $\Proj k[x,y,z]$, where the variables $x,y,z$ have degree $a,b,c$, respectively. A relation $(e,f,-g)$ defines a homogeneous polynomial $x^ey^f - z^g$ of degree $d=gc$.

There is a degree map $deg: \RR^3\to \RR$ that maps $(u,v,w) \mapsto au+bv+cw$. The toric variety $\PP(a,b,c)$ is then defined by the triangle $\Delta = \deg^{-1}(d) \cap \RR^3_{\geq 0}$ in the plane $deg^{-1}(d) \isom \RR^2$ and lattice $deg^{-1}(d)\cap \ZZ^3 \isom \ZZ^2$. With $d$ coming from the relation, we choose $(0,0,g)$ as the origin of the plane. The unit vector in the ``vertical'' direction is then  $(e,f,-g)$.

A divisor defined by a degree $d$ homogeneous polynomial in $k[x,y,z]$ has self-in\-ter\-sec\-tion number $d^2/(abc)$, which is the width of $\Delta$: 
\[ w = \frac{(gc)^2}{abc} = \frac{g^2 c}{ab}.\]
This identifies condition (1) of the theorem with condition (1) in Theorem~\ref{thm-toric}. To identify conditions (2) in the two theorems, we count lattice points in the columns of $m\Delta$.

Let us construct a linear function $h$ on $\RR^3$ that takes value $i$ on the column with index $i$ in $m\Delta$. Since $(0,0,g)$ and $(e,f,0)$ lie in column $0$, the function $h$ must be the dot product with 
\[  \alpha(f,-e,0) \]
for some constant $\alpha$. We can use $h$ to compute $w$. The two nonzero vertices of $\Delta$ are
\[ (\frac{cg}{a},0,0), \quad (0,\frac{cg}{b},0).\]
Thus, 
\[ w = \alpha( \frac{cg}{a}f - \frac{cg}{b}(-e) ) = \alpha \frac{(cg)^2}{ab},\]
from which we solve $\alpha = 1/c$. (Note that we chose  the vertex $(\frac{cg}{a},0,0)$ to be on the right hand side of the plane and the vertex $(0,\frac{cg}{b},0)$  on the left hand side.)

Consider $m\Delta$ and its vertex (on the left hand side)  $P=(0,\frac{mcg}{b},0)$. Instead of counting lattice points $Q$ in the second column from the left, we count lattice points $Q-P \in \ker(deg)\cap\ZZ^3$, such that $h(Q-P) = 1$.  These points are of the form $(u,v,w) \in \ZZ^3$, $u,w\geq 0$, $v\leq 0$, satisfying the equations 
\begin{gather*}
   h(u,v,w)=1 \quad \Leftrightarrow \quad  \frac{f}{c}u - \frac{e}{c}v = 1\\
  deg(u,v,w)=0  \quad \Leftrightarrow \quad au + bv+ cw = 0.
  \end{gather*}
There is a possibly non-integral point
\[ \frac{1}{g}(b,-a,0) \]
satisfying these equations. Any other point is obtained from this one by subtracting a rational multiple of $(e,f,-g)$:
\[ (u,v,w) =  \frac{1}{g}(b,-a,0) + \frac{\delta}{g}(e,f,-g),\quad \delta\leq 0.\]
Changing $v$ to $-v$, we get that the number $n$ of lattice points in the second column equals the number of integers $\delta\leq 0$, such that 
\[ (b,a) + \delta(e,-f) \]
has both components non-negative, divisible by $g$.

By a similar argument we get that the number of lattice points in the $n$-th column from the right is the number of integers $\gamma\geq 0$, such that 
\[ (n-1)(b,a) + \gamma(e,-f) \]
has both components non-negative and divisible by $g$. 

Finally, if the $(n+1)$-th column from the right has a lattice point on the top edge, then this point corresponds to the solution $\epsilon=0$, such that
\[ n(b,a) + \epsilon(e,-f)\]
has both components nonnegative and divisible by $g$. This happens if and only if $n(b, a) = (0,0) \pmod{g}$.\qed


\section{The moduli space $\overline{M}_{0,n}$}

We show that the Cox ring of $\overline{M}_{0,n}$ is not finitely generated if the characteristic of $k$ is $0$ and $n \geq 13$. For this we use the method of Castravet and Tevelev \cite[Proposition 3.1]{CastravetTevelev} to reduce to the case of a weighted projective plane blown up at the identity $t_0$ of its torus.

Recall that the moduli space $\overline{M}_{0,n}$ of stable $n$-pointed genus zero curves has been described by Kapranov as the iterated blowup of $\PP^{n-3}$ along proper transforms of linear subspaces spanned by $n-1$ points in linearly general position. The Losev-Manin moduli space $\overline{L}_n$ is constructed similarly by blowing up $\PP^{n-3}$ along proper transforms of linear subspaces spanned by $n-2$ points in linearly general position.
The space $\overline{L}_n$ is a toric variety and its fan $\Sigma_n$ is the barycentric subdivision of the fan of $\PP^{n-3}$. More precisely, the fan $\Sigma_n$ has rays generated by all vectors in $\RR^{n-3}$ such that each entry is equal to either $0$ or $1$, and all their negatives.

The main reduction step follows from the result that, given a surjective morphism $X \to Y$ of normal $\QQ$-factorial projective varieties, if $X$ has a finitely generated Cox ring, then so does $Y$ (see Okawa \cite{Okawa}). 
Using \cite[Proposition 3.1]{CastravetTevelev} or its corollary Theorem~\ref{thm-CT}, for suitable values of $n,a,b,c$ (e.g. $n=13$ and $(a,b,c) = (26,15,7)$, see \ref{example.configuration}), one can construct a rational map $\Bl_{t_0} \overline{L}_{n} \dra \Bl_{t_0} \PP(a,b,c)$ as a sequence of such surjective morphisms and small modifications (i.e. isomorphisms in codimension $1$) of normal $\QQ$-factorial projective varieties. Moreover, there exist surjective morphisms $\overline{M}_{0,n} \rightarrow \Bl_{t_0} \overline{L}_n$ and $\overline{M}_{0,n+1} \rightarrow \overline{M}_{0,n}$. Small modifications do not change the Cox ring, hence the non-finite generation of a Cox ring of $\overline{M}_{0,n}$ would follow from the non-finite generation of a Cox ring of $\Bl_{t_0} \PP(a,b,c)$.

The following is an immediate corollary of the main reduction result of Castravet and Tevelev \cite[Proposition 3.1]{CastravetTevelev}.

\begin{theorem}[Castravet-Tevelev \cite{CastravetTevelev}]  \label{thm-CT}
Let $\overline{L}_n$ be defined by the fan $\Sigma_n$ in the lattice $N=\ZZ^{n-3}$, as above. Suppose there exists a saturated sublattice $N'\subset N$ of rank $n-5$, such that
\begin{enumerate}
\item The vector space $N' \otimes \QQ$ is generated by rays of $\Sigma_n$.
\item There exist three rays of $\Sigma_n$ with primitive generators $u,v,w$ whose images generate $N/N'$ and such that $au+bv+cw=0 \pmod{N'}$ for some integers $a,b,c >0$, with $\gcd(a,b,c)=1$.
\end{enumerate} 
Then there exists a rational map $\Bl_{t_0} \overline{L}_n \dra \Bl_{t_0} \PP(a,b,c)$ that is a composition of rational maps each of which is either a small modification between normal $\QQ$-factorial projective varieties or a surjective morphism between normal $\QQ$-factorial projective varieties.
In particular, if $\Bl_{t_0} \PP(a,b,c)$ does not have a finitely generated Cox ring, then $\Bl_{t_0} \overline{L}_n$ (and $\overline{M}_{0,n}$) does not have it either.
\end{theorem}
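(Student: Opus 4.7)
The plan is to construct the rational map from the quotient $\pi\colon N \to \overline{N} := N/N'$, which has rank $2$ because $N'$ has rank $n-5$. Under $\pi$, the primitive generators of the three distinguished rays map to primitive vectors $\overline{u},\overline{v},\overline{w}$ in $\overline{N}$ satisfying $a\overline{u}+b\overline{v}+c\overline{w}=0$; since these images generate $\overline{N}$ and $\gcd(a,b,c)=1$, the fan $\Sigma \subset \overline{N}\otimes\R$ with rays $\R_{\geq 0}\overline{u}, \R_{\geq 0}\overline{v}, \R_{\geq 0}\overline{w}$ is exactly the fan of $\PP(a,b,c)$. The geometric map should arise from the surjection $\pi$ on lattices, packaged correctly at the level of fans.

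First I would promote this to an actual toric morphism. Because $N'\otimes\QQ$ is spanned by rays of $\Sigma_n$, the rays of $\Sigma_n$ split into ``vertical'' rays (those lying in $N'\otimes\R$, which get collapsed by $\pi$) and ``horizontal'' rays (which have nonzero image in $\overline{N}$). Following Castravet and Tevelev, I would modify $\Sigma_n$ into a fan $\widetilde{\Sigma}$ on $N$ whose maximal cones each project into a cone of $\Sigma$, by performing a sequence of toric small $\QQ$-factorial modifications (flips) on $\overline{L}_n$; a controlled toric divisorial contraction (contributing a surjective morphism in the statement) removes the horizontal rays whose images in $\overline{N}$ do not lie along one of $\overline{u},\overline{v},\overline{w}$. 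The lattice surjection $\pi$ then defines a surjective toric morphism $\phi\colon X_{\widetilde{\Sigma}} \to \PP(a,b,c)$.

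Next I would lift the whole composition to the blowup of $t_0$. Since $t_0$ lies in the open torus and every toric modification used above is an isomorphism on the torus, each flip lifts to a small $\QQ$-factorial modification between the corresponding blowups at $t_0$, and each divisorial contraction lifts to a surjective morphism between these blowups. Moreover, $\phi$ is equivariant and sends $t_0$ to $t_0$, so it lifts to a surjective morphism $\Bl_{t_0} X_{\widetilde{\Sigma}} \to \Bl_{t_0}\PP(a,b,c)$. Composing everything yields the required rational map $\Bl_{t_0}\overline{L}_n \dra \Bl_{t_0}\PP(a,b,c)$ as a sequence of small modifications and surjective morphisms between normal $\QQ$-factorial projective varieties. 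The final assertion about non-finite generation is then immediate: Okawa's result \cite{Okawa} implies that finite generation of a Cox ring is inherited by the target of a surjective morphism in this class, and small modifications do not change the Cox ring; combining with the surjections $\overline{M}_{0,n} \to \Bl_{t_0}\overline{L}_n$ transports non-finite generation of $\Bl_{t_0}\PP(a,b,c)$ back to $\overline{M}_{0,n}$.

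The main obstacle is the combinatorial step producing $\widetilde{\Sigma}$ from $\Sigma_n$: one must argue that any rational polyhedral fan can be reshaped by flips and controlled contractions into one whose cones are compatible with a given lattice quotient, whenever the kernel of the quotient is rationally generated by rays of the original fan. This is precisely the technical substance of \cite[Proposition 3.1]{CastravetTevelev}, which is why the theorem above is flagged as an immediate corollary and the remaining work reduces to interpreting the hypotheses in the language of that proposition.
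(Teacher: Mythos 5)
Your proposal is correct and matches the paper's treatment: the paper offers no independent argument, stating the theorem as an immediate corollary of \cite[Proposition 3.1]{CastravetTevelev} combined with Okawa's descent result and the invariance of Cox rings under small modifications, which is exactly where your argument also places the technical weight. Your toric sketch of the underlying construction (quotient fan giving $\PP(a,b,c)$, birational modifications making the projection a morphism, lifting over $t_0$) is a reasonable outline of Castravet--Tevelev's own proof, but like the paper you ultimately defer that substance to their Proposition 3.1.
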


\subsection{Proof of Theorem~\ref{thm-moduli}}  \label{example.configuration}
We show that Theorem~\ref{thm-CT} applies in the case $n=13$ and $(a,b,c) = (26,15,7)$.

Let $e_1,\ldots,e_{10}$ be the canonical basis of $\ZZ^{10}$. Let 
\begin{align*}
a_1&=e_1+e_5, &
a_6&=e_1+e_2+e_3+e_4+e_{10},\\
 a_2&=e_1+e_2+e_6, &
 a_7&=e_5+e_6+e_7+e_8+e_9+e_{10},\\
  a_3&=e_1+e_2+e_3+e_7, & a_8&=e_4+e_5+e_7,
  \\
   a_4&=e_1+e_2+e_3+e_4+e_8, &
    a_9&=e_1,\\
    a_5&=e_1+e_2+e_3+e_4+e_9, &
    a_{10}&=e_4.
\end{align*}
The matrix $A$ with columns $a_1,\ldots,a_{10}$ has determinant $1$, so $a_1,\ldots,a_{10}$ form a basis of $\ZZ^{10}$. Let $u=e_1$, $v=e_2$ and $w=-4u-2v+2a_1+a_2+a_3-a_8+a_{10}=e_3+e_5+e_6$. So, we have that 
\begin{align*}
26u+15v+7w 		& = 11a_1+8a_2+4a_3+a_4+a_5+a_6-a_7-3a_8	                          \\
         a_9 	&= u                                 	                              \\
	       a_{10}	&= 4u+2v+w-2a_1-a_2-a_3+a_8 
\end{align*}
Let  $N'\subset \ZZ^{10}$ be the sublattice generated by $a_1,\ldots, a_8$. Then $\ZZ^{10}/N'$ is generated by $a_9,a_{10}$, both of which can be expressed in terms of $u,v,w$. The vectors $u,v,w$ satisfy the relation 
\[26u+15v+7w = 0 \pmod{N'}.\]

We have seen that $\Bl_{t_0} \PP(26,15,7)$ does not have a finitely generated Cox ring (see Example~\ref{example.wps}), hence by Theorem~\ref{thm-CT}, for any $n \geq 13$ the moduli space $\overline{M}_{0,n}$ does not have a finitely generated Cox ring either.

\bibliographystyle{plain}
\bibliography{cox}

\end{document}